\documentclass[10pt]{article}
\usepackage[square,authoryear]{natbib}
\usepackage{graphicx}
\usepackage{marsden_article}
\usepackage{eucal}
\usepackage{stmaryrd}
\usepackage{mathrsfs}
\usepackage{epstopdf,framed}
\usepackage{pdfsync}

\DeclareFontFamily{OT1}{pzc}{}
\DeclareFontShape{OT1}{pzc}{m}{it}{<-> s * [1.100] pzcmi7t}{}
\DeclareMathAlphabet{\mathpzc}{OT1}{pzc}{m}{it}

\DeclareGraphicsRule{.tif}{png}{.png}{`convert #1 `dirname #1`/`basename #1 .tif`.png}
\setcounter{footnote}{0}

\begin{document}
\newtheorem{remark}[theorem]{Remark}

\title{Implicit Lagrange-Routh Equations and Dirac Reduction}


\newcommand{\todoHiro}[1]{\vspace{5 mm}\par \noindent
\framebox{\begin{minipage}[c]{0.95 \textwidth} \textcolor{red}{Hiro:} \tt #1
\end{minipage}}\vspace{5 mm}\par}


{
\author{Eduardo Garc\'{\i}a-Tora\~{n}o Andr\'{e}s\thanks{
email: egtoranoandres@gmail.com}
 \\Department of Mathematics, Faculty of Science\\
 The University of Ostrava\\
30. dubna 22, 701 03 Ostrava, Czech Republic\\[3mm]
 \and
Tom Mestdag\thanks{email: tom.mestdag@ugent.be}\\
Department of Mathematics\\ Ghent University\\ Krijgslaan 281, B--9000 Gent, Belgium\\[3mm]
\and
Hiroaki Yoshimura\thanks{email: yoshimura@waseda.jp}\\ Department of Applied Mechanics and Aerospace Engineering
\\ Waseda University
\\ Okubo, Shinjuku, Tokyo
\\ 169-8555, Japan
}
}

\maketitle
\vspace{-0.3in}

\begin{center}
\abstract{\vspace{0.2cm}
In this paper, we make a generalization of Routh's reduction method for Lagrangian systems with symmetry to the case where not any regularity condition is imposed on the Lagrangian. First, we show how implicit Lagrange-Routh equations can be obtained from the Hamilton-Pontryagin principle, by making use of an anholonomic frame, and how these equations can be reduced. To do this, we keep the momentum constraint implicit throughout and we make use of a Routhian function defined on a certain submanifold of the Pontryagin bundle. Then, we show how the reduced implicit Lagrange-Routh equations can be described in the context of dynamical systems associated to Dirac structures, in which we fully utilize a symmetry reduction procedure for implicit Hamiltonian systems with symmetry.
}
\end{center}

{\small
\noindent
{\bf Keywords.} Routh reduction, implicit Lagrange-Routh equations,  Hamilton-Pontryagin principle, Dirac structures.\\
{\bf MSC.} 37J15, 53D20, 70G65, 70H33
}
\tableofcontents

\section{Introduction}\label{sec:Intro}
There is no doubt that there exists a close relation between symmetries and conservation laws, which has been one of the fundamental motivations for many geometric approaches to mechanical systems. The symmetry group of a dynamical system can always be used to reduce the system to one with fewer variables. When symmetry, besides, leads to conserved quantities, it can be very advantageous to incorporate that property into the reduction process. For example, when the system is Hamiltonian on a symplectic manifold,  one first restricts the attention to the submanifold determined by the conserved momenta, and only later one takes the quotient of this submanifold by the remaining symmetry (which in general happens to be only a subgroup of the original symmetry group). This, in a few words, is the so-called {\it symplectic reduction theorem} (see, \cite{MaWe1974,Ma1992}).

Symplectic reduction may be applied to the standard case of classical Hamiltonian systems defined on the cotangent bundle. While this procedure has been thoroughly studied in the literature, its Lagrangian counterpart, the so-called Routh or tangent bundle reduction, has traditionally received much less attention, even though since its conception in~\cite{Ro1877, Ro1884} it has proven to be a valuable tool to obtain and discuss, e.g., the stability of steady motions or relative equilibria. A few modern approaches to the topic can be found in the papers~\cite{CrMe2008, LaCaVa2010, MaRaSc2000}. One of the drawbacks of these papers is that a regularity condition needs to be assumed. In this paper, we will focus upon Routh reduction within the context of Dirac structures, {\it without} assuming any regularity hypotheses.

For simplicity, let us consider for a moment the case of a Lagrangian $L(x,\dot x,\dot\theta)$ with a single cyclic coordinate $\theta$. The first step in Routh's procedure is to write the corresponding velocity $\dot \theta$ in terms of the remaining coordinates and velocities $(x,\dot x)$ by making use of the conservation law  $\partial L/\partial \dot\theta=\mu$, which follows from Noether's theorem. One then introduces the restriction $R^\mu(x,\dot x)$ of the function $L-\dot\theta(\partial L/\partial \dot\theta)$ to the level set where the momentum is $\mu$, the so-called {\it Routhian} (see, e.g., \cite{Ma1992}). With this function, one can observe that the remaining Euler-Lagrange equations of the coordinates $x$, again when constrained to the level set associated to $\mu$, are in fact Euler-Lagrange equations for the Routhian $R^\mu$. The end result of Routh's reduction method is therefore that it reduces the Euler-Lagrange equations of the Lagrangian $L(x,\dot x,\dot
 \theta)$
  to those of $R^\mu(x,\dot x)$ on a reduced configuration space. A crucial ingredient in the above process, however, is that the Lagrangian satisfies the  regularity condition  $(\partial^2 L/\partial^2 \dot\theta)\neq 0$, which is necessary for carrying out the first step. Routh's procedure and the regularity condition for it to be applicable can be generalized to arbitrary Lagrangians with a (possibly) non-Abelian symmetry group $G$. In this situation, the condition is often referred to as {\it $G$-regularity}.

It is easy to construct a Lagrangian which fails to be $G$-regular. The following example in $\mathbb{R}^2$ is taken from~\cite{LaCa2010}:
\[
L(x,y,v_x,v_y)=(v_x)^2+v_x v_y- V(x).
\]
Note that it has a cyclic coordinate $y$ (and therefore an Abelian symmetry group $G=\mathbb{R}$), but that it is not $G$-regular. Also linear  $G$-invariant Lagrangians will always fail to be $G$-regular. For example, the dynamics of $N$ vortices in the plane admit the following Lagrangian:
\[
L(z_l,{\dot z}_l)=\frac{1}{2i}\sum_k \gamma_k\big(\bar z_k\dot z_k-z_k\dot{\overline{z}}_k\big)-\frac{1}{2}\sum_n\sum_{k\neq n} \gamma_n\gamma_k\text{ln}|z_n-z_k|, \qquad z_l\in\mathbb{C},
\]
where $\gamma_k\in\mathbb{R}$ are parameters of the model; see \cite{Ch1978} for more details. This Lagrangian is clearly linear in its velocities and invariant under rotations of the vortices in the plane but not $G$-regular. Also in the context of plasma physics, linear Lagrangians often appear (see, e.g., \cite{Li1983}).
\medskip

The aim of this paper is to extend Routh's method to the most general case where not any regularity condition is imposed on the Lagrangian.  Our approach is based on the {\it Hamilton-Pontryagin principle} (as it is called in \cite{YoMa2006b}) which leads to an implicit formulation of the Euler-Lagrange equations on the so-called Pontryagin bundle $TQ\oplus T^*Q$. We will show that under the assumption of symmetry, we can reduce these implicit equations to a set of {\it reduced implicit Lagrange-Routh equations}. The key ingredient is that we can circumvent the hypotheses on regularity by keeping the momentum constraint implicit throughout. Our method involves a generalized Routhian function which is defined on a certain submanifold of the Pontryagin bundle rather than on a submanifold of the tangent bundle, as is commonly the case for $G$-regular systems. Implicit Lagrangian systems can be geometrically described in the framework of Dirac structures (see \cite{YoMa2006a}). The definition of Dirac structure in \cite{Co1990,Do1993} was originally inspired by the notion of Dirac brackets, which was coined by Paul Dirac (in the 1950s) for dealing
with constraints in the Hamiltonian setting when the given Lagrangian is singular (see e.g. \cite{CoWe1988,New1}). So from the very start, there has been a strong relation with singular Lagrangians and constraints. In the second part of the paper, we will show that  the reduced implicit Lagrange-Routh equations may be also formulated in terms of a Dirac structure, by considering a reduction method known for implicit Hamiltonian systems from \cite{Va1998,BlVa2001}.

For completeness, we mention that the paper by \cite{LaCa2010} also deals with the general case. However, these authors use a variational approach which is based on Hamilton's principle rather than on the Hamilton-Pontryagin principle. Therefore, it focusses on different aspects of the theory.
\medskip

This paper is organized as follows. In \S2, we review the derivation of the standard implicit Euler-Lagrange equations for a possibly degenerate (or singular) Lagrangian $L$ via the Hamilton-Pontryagin principle. We use a technique that is similar to the one that has been used in, for instance, \cite{CrMe2008, CrMe2010} to rewrite the implicit Euler-Lagrange equations in terms of an {\it anholonomic} frame. Once the implicit equations on a general frame are obtained, we specialize these expressions to a particular frame adapted to a given symmetry of the Lagrangian (\S\ref{sec:implicit}). For a prescribed value of  momenta, we find the implicit Lagrange-Routh equations, and express them in an invariant form. In \S\ref{sec:redsection} we reduce them to obtain the reduced implicit Lagrange-Routh equations. The regular cases are discussed in \S\ref{sec:cases}, where we illustrate our theory by showing how the reduced implicit Lagrange-Routh equations agree with those developed in the literature. \S\ref{sec:Dirac} rephrases the previous results in terms of reduction of Dirac structures. We show how the reduced implicit Lagrange-Routh equations correspond to a certain reduced implicit Hamiltonian system. Finally, in \S\ref{sec:examples}, some examples are shown.

\section{A version of the Hamilton-Pontryagin principle using anholonomic frames} \label{sec:anhol}
\paragraph{Hamilton-Pontryagin principles.}
Let $Q$ be a configuration manifold of a mechanical system with ${\rm dim}\,Q=n$. Coordinates on $Q$ are given by $q^\alpha$, fiber coordinates on $TQ$ and $T^*Q$ will be denoted by $v^\alpha$ and $p_\alpha$, respectively. In the following, the index $\alpha$ runs from $1$ to $n$ unless otherwise noted. The notations are chosen in such a way that we can make a notational difference between a general curve $(q(t),v(t))$ in $TQ$, and the lifted curve $(q(t),\dot q(t))$ in $TQ$ of a curve $q(t)$ in $Q$, where $t$ denotes the time in $I=\{t \in \mathbb{R} \mid a \le t \le b\}$.

The Hamilton-Pontryagin principle leads to an implicit form of the Euler-Lagrange equations of a possibly degenerate Lagrangian $L$. These equations follow from considering the following variational principle on $TQ\oplus T^*Q$:
\[
\delta\int^a_b \bigl[ L(q,v)+\langle p, ({\dot q}-v)\rangle \bigr]dt= \delta\int^a_b \bigl[ L(q^\alpha,v^\alpha)+p_\alpha({\dot q}^\alpha-v^\alpha) \bigr]dt=0,
\]
for variations of $\left(q(t),v(t),p(t)\right)$  where $q(t)$ has fixed endpoints and $v(t)$ and $p(t)$ are arbitrary. From this, we can easily conclude that a solution $(q(t),v(t),p(t))$ of the implicit Euler-Lagrange equations must satisfy
\begin{equation} \label{eq:IL1}
{\dot q}^\alpha= v^\alpha, \qquad  p_\alpha-\frac{\partial L}{\partial v^\alpha}=0, \qquad {\dot p}_\alpha= \frac{\partial L}{\partial q^\alpha}.
\end{equation}
See \cite{YoMa2006b} for more details.

\paragraph{Anholonomic frames and quasi-velocities.}
In this section, we shall rewrite the implicit Euler-Lagrange equations in terms of the so-called {\it quasi-velocities}. Lagrangian equations which involve quasi-velocities are often called {\em Hamel equations} in the literature (see, for instance, \cite{MarSch1993b, BlMaZe2009,CrMe2010}). We will need these expressions when we consider the Routhian in the following sections.

In the next paragraphs, we will need the natural lifts of vector fields on $Q$ to its tangent manifold and Pontryagin bundle, respectively. Let $(q^\alpha,v^\alpha)$ be the natural tangent bundle coordinates on $TQ$. If $X=X^\alpha (\partial /\partial q^\alpha)$ is a vector field on $Q$, then its complete lift $X^{\rm C}$ and vertical lift $X^{\rm V}$ are the vector fields on $TQ$, given by
\[
X^{\rm C} = X^\beta  \frac{\partial }{\partial q^\beta} + \frac{\partial X^\beta}{\partial q^\gamma} v^\gamma \frac{\partial}{\partial v^\beta}, \qquad X^{\rm V} = X^\beta \frac{\partial}{\partial v^\beta}.
\]
Likewise, its complete lift to $M=TQ\oplus T^*Q$ is the vector field
\[
X^{M} = X^\alpha \frac{\partial }{\partial q^\alpha} + \frac{\partial X^\beta}{\partial q^\alpha}v^\alpha \frac{\partial}{\partial v^\beta}   -\frac{\partial X^\beta}{\partial q^\alpha}p_\beta \frac{\partial}{\partial p_\alpha}.
\]
A standard reference for the properties of these vector fields is the book \cite{YaIs1973}. Given a vector field $Y=Y^{\alpha}(\partial/\partial q^{\alpha})$ on $Q$, we can form the linear function $\bar Y = Y^\alpha p_\alpha$ on $T^*Q \subset M$.  Likewise, for a 1-form $\theta =\theta_\alpha {d}q^\alpha$ we can define a linear function on $\bar\theta = \theta_\alpha v^\alpha$ on $TQ \subset M$. The following properties can then easily be verified:
\begin{equation} \label{action2}
X^{M} (\bar Y) =  \overline{[X,Y]}, \qquad X^{M} (\bar \theta) = \overline{{\mathcal L}_X\theta}.
\end{equation}

Quasi-velocities are fiber coordinates in $T_qQ$, defined with respect to a non-coordinate or {\bfi anholonomic frame}. Let $Z_\alpha = Z_\alpha^\beta (\partial/\partial {q^\beta})$ be a new basis for the set of vector fields on $Q$. This means that at each point $q$ the matrix $(Z^\alpha_\beta(q))$ has an inverse matrix, smoothly defined, which we will denote by  $(W^\alpha_\beta(q))$.
Each vector $v_q \in T_qQ$ can be expressed by $v_q = {\rm v}^\alpha Z_\alpha(q)$. The fiber coordinates $({\rm v}^\alpha)$ are then the {\bfi quasi-velocities} of $v_q$ with respect to the frame $\{Z_\alpha\}$. Their relation to the natural fiber coordinates is simply ${\rm v}^\alpha = W^\alpha_\beta v^\beta$.

The coordinate frame $\{\partial /\partial q^\alpha  \}$ is an example of a frame, whose corresponding quasi-velocities are simply the natural fiber coordinates $v^\alpha$. A measure for the deviation of a given frame $\{Z_\alpha\}$ from being a coordinate frame, is given by its {\bfi object of anholonomity} (see, e.g., \cite{Sc1954}), which is defined by the relation
\[
[Z_\beta,Z_\gamma] = R^\alpha_{\beta\gamma} Z_\alpha.
\]
The $R^\alpha_{\beta\gamma}$ are given in coordinates by the following expressions:
\begin{equation}
R^\alpha_{\beta\gamma}=\left(Z^\tau_\beta W^\alpha_\delta \frac{\partial Z^\delta_\gamma}{\partial q^\tau}-Z^\tau_\gamma W^\alpha_\delta \frac{\partial Z^\delta_\beta}{\partial q^\tau}\right)=
-\left(Z^\tau_\beta \frac{\partial W^\alpha_\delta}{\partial q^\tau}Z^\delta_\gamma -Z^\tau_\gamma \frac{\partial W^\alpha_\delta}{\partial q^\tau}Z^\delta_\beta\right). \label{eq:R_coord}
\end{equation}

We can lift the frame $\{Z_\alpha\}$ on $Q$ to the frame  $\{Z^{\rm C}_\alpha, Z^{\rm V}_\alpha\}$ on $TQ$.  In what follows, we will often make use of the following, easily verifiable, properties:
\begin{alignat}{3}\label{eq:action}
  {Z^{\rm C}_\alpha}(q^\beta) &=  Z^\beta_\alpha ,\qquad \qquad & {Z^{\rm V}_\alpha}(q^\beta) &= 0, \nonumber \\
  {Z^{\rm C}_\alpha}({\rm v}^\beta) &= - R^\beta_{\alpha\gamma}{\rm v}^\gamma, \qquad  \qquad & {Z^{\rm V}_\alpha}({\rm v}^\beta) &= \delta^\beta_\alpha.
\end{alignat}

The 1-forms  $W^\alpha= W^\alpha_\beta dq^\beta$ form a basis for 1-forms on $Q$.  If $(q^\alpha,p_\alpha)$ denote the natural coordinates on $T^*Q$, we  can also introduce {\bfi quasi-momenta} by means of ${\rm p}_\beta = Z^\alpha_\beta p_\alpha$, and we note that the natural pairing is preserved: $\langle p,v\rangle=p_\alpha v^\alpha ={\rm p}_\alpha {\rm v}^\alpha$.

The independent variations $\{ \delta q^\alpha, \delta v^\alpha, \delta p_\alpha  \}$ form a basis for all variations on $M=TQ \oplus T^*Q$. We can change this to a new basis, adjusted to the new frame $\{Z_\alpha, W^\alpha \}$ on $M$. If we denote
\[
w^\alpha =  W^\alpha_\beta \delta q^\beta,
 \]
then $\{ w^\alpha, \delta {\rm v}^\alpha, \delta {\rm p}_\alpha \}$ will be all independent variations and hence this set will form a new basis for all variations on $M$. Here one should think of the quasi-velocities ${\rm v}^\alpha$ as functions on $TQ$ (or on $M$), and therefore
\[
\delta {\rm v}^\alpha = \frac{\partial W^\alpha_\beta}{\partial q^\gamma} v^\beta \delta q^\gamma + W^\alpha_\beta \delta v^\beta.
\]

\paragraph{The implicit Lagrangian systems with quasi-velocities and quasi-momenta.}
The direct computation using~\eqref{eq:R_coord} yields the variation of a Lagrangian $L$ on $TQ$ as
\begin{equation} \label{eq:varL}
\delta L =  \frac{\partial L}{\partial q^\alpha}\delta q^\alpha + \frac{\partial L}{\partial {v}^\alpha}\delta {v}^\alpha = {Z^{\rm C}_\alpha}(L) w^\alpha + {Z^{\rm V}_\alpha}(L) (\delta {\rm v}^\alpha + R^\alpha_{\beta\gamma} {\rm v}^\gamma w^\beta).
\end{equation}

We will now give a version of the Hamilton-Pontryagin principle that makes use of the anholonomic frame. We are looking for a curve $(q^{\alpha}(t), {\rm v}^\alpha(t), {\rm p}_{\alpha}(t))$ in $M=TQ\oplus T^*Q$, namely the one whose base curve is $q(t)$ and whose fiber coordinates are given by the curve $({\rm v}^\alpha(t),{\rm p}_\alpha(t))$ in quasi-velocities and quasi-momenta  which satisfies the following variational principle
\[
0=\delta\int^a_b \bigl[ L(q,v)+\langle p, {\dot q}-v\rangle \bigr]dt= \delta\int^a_b \bigl[ L(q,{\rm v})+{\rm p}_\alpha({\rm u}^\alpha - {\rm v}^\alpha) \bigr]dt.
\]
Here ${\rm u}^\alpha(t)$ stand for the quasi-velocities of the lifted curve $\dot q(t)$ in $TQ$, namely, ${\rm u}^\alpha(t) = W^\alpha_\beta(q(t)) {\dot q}^\beta(t)$.

If we take the above expression~\eqref{eq:varL} for $\delta L$ into account, we obtain
\[
0 = \int^a_b \left[ {Z^{\rm C}_\alpha}(L) w^\alpha + {Z^{\rm V}_\alpha}(L) (\delta {\rm v}^\alpha + R^\alpha_{\beta\gamma} {\rm v}^\gamma w^\beta) + \delta {\rm p}_\alpha({\rm u}^\alpha - {\rm v}^\alpha) + {\rm p}_\alpha \delta {\rm u}^\alpha - {\rm p}_\alpha \delta {\rm v}^\alpha     \right]dt.
\]
First, we compute
\begin{eqnarray*}
\int^a_b   {\rm p}_\alpha \delta {\rm u}^\alpha     dt &=& \int^a_b  \left[  {\rm p}_\alpha    \frac{\partial W^\alpha_\beta}{\partial q^\gamma} {\dot q}^\beta \delta q^\gamma + {\rm p}_\alpha W^\alpha_\beta \delta {\dot q}^\beta \right] dt \\[2mm]
&=& \int^a_b  \left[  {\rm p}_\alpha  \left(  \frac{\partial W^\alpha_\beta}{\partial q^\gamma} - \frac{\partial W^\alpha_\gamma}{\partial q^\beta} \right) {\dot q}^\beta \delta q^\gamma  - {\dot {\rm p}}_\alpha  w^\alpha  \right] dt \\[2mm]
&=&  \int^a_b  \left[  ({\rm p}_\alpha R^\alpha_{\mu\tau} {\rm u}^\mu  - {\dot {\rm p}}_\tau)  w^\tau  \right] dt.
\end{eqnarray*}
In the above, we have used integration by parts and the fact that $\delta q^\alpha(a) = \delta q^\alpha(b) =0$. We have also made use of the expression \eqref{eq:R_coord} for $R^\alpha_{\mu\tau}$. Then, by substituting this into our variational principle, it follows that
\begin{equation*}
\begin{split}
0 &= \int^a_b \left[ \Big( {Z^{\rm C}_\tau}(L) + {\rm p}_\alpha R^\alpha_{\mu\tau} {\rm u}^\mu +  {Z^{\rm V}_\alpha}(L) R^\alpha_{\tau\gamma} {\rm v}^\gamma  - {\dot {\rm p}}_\tau   \Big)  w^\tau \right. \\
& \hspace{3cm}\left.   + \Big({Z^{\rm V}_\alpha}(L) - {\rm p}_\alpha\Big)\delta {\rm v}^\alpha  - \Big({\rm v}^\alpha - {\rm u}^\alpha\Big) \delta {\rm p}_\alpha   \right]dt
\end{split}
\end{equation*}
holds for all variations $w^\tau$, $\delta {\rm v}^\alpha$ and $\delta{\rm p}_\alpha$. Since these are all independent, we conclude that the curve $(q^\alpha(t),{\rm v}^\alpha(t), {\rm p}_\alpha(t))$ should satisfy
\begin{equation}\label{eq:IL}
 {\rm v}^\alpha= {\rm u}^\alpha, \qquad  {\rm p}_\alpha={Z^{\rm V}_\alpha}(L), \qquad {\dot {\rm p}}_\alpha= {Z^{\rm C}_\alpha}(L).
\end{equation}
These are the {\bfi  implicit Euler-Lagrange equations in quasi-velocities and quasi-momenta}. Remark that the last equation takes such a simple form, only because we have already made use of the first two equations.

\section{The implicit Lagrange-Routh equations} \label{sec:implicit}
\paragraph{The Lie group action on $Q$.}
Assume that $G\times Q \to Q$ is a free and proper action of a possibly non-Abelian Lie group $G$ on $Q$, so that we can regard $Q \to Q/G$ as a principal $G$-bundle. We will assume in this section that the Lagrangian $L:TQ \to \mathbb{R}$ is invariant under the symmetry group $G$. We use $x^i$ for coordinates on the {\it shape space} $Q/G$ and $(q^\alpha)=(x^i,\theta^a)$ for coordinates on $Q$. We denote the corresponding coordinates on the tangent bundle by $(v^i,v^a)$. As before, we use the notation $(x(t),{\dot x}(t))$ to denote the lifted curve on $T(Q/G)$ of a curve $x(t)$ in $Q/G$.

We now choose a principal connection on $Q\to Q/G$ and let $X_i$ denote the horizontal lift of the coordinate vector fields $\partial/\partial x^i$ on the shape space $Q/G$ with respect to this connection. In terms of the coordinates introduced above, we have
\[
X_i = \frac{\partial }{\partial x^i} - \Lambda_i^a \frac{\partial}{\partial \theta^a},
\]
where $\Lambda_i^a$ are functions on $Q$ (the connection coefficients) and we employ the symbol $\Lambda$ to denote the {\it connection one-form} on $TQ$ which takes values in $\mathfrak{g}$.
\medskip

Let $\{E_a\}$ be a basis of the Lie algebra $\mathfrak{g}$, and $\{E^a\}$ the corresponding dual basis of $\mathfrak{g}^*$. We denote by $C^c_{ab}$ the structure constants of $\mathfrak{g}$, $[E_a,E_b]=C^c_{ab}E_c$.
 The fundamental vector fields associated to the action will be denoted by $\{{\tilde E}_a\}$. We can express them as
\[
{\tilde E}_a = K^b_a \frac{\partial }{\partial \theta^b},
\]
for some functions $K^b_a$ on $Q$, often called the coefficients of the infinitesimal generator map.

We  suppose throughout  the paper that $G$ is connected. This has the advantage that invariance of functions and tensor fields can be checked by the vanishing of the Lie derivatives of these functions and tensor fields, in the direction of the fundamental vector fields of the action.

The action on $Q$ lifts to actions on $TQ$, $T^*Q$ and $M=TQ\oplus T^*Q$. In each case, it is well-known that the fundamental vector fields of the lifted actions are given by the complete lifts (to $TQ$, $T^*Q$ and $M$, respectively) of the fundamental vector fields on $Q$. In the case of the action on $M$ the fundamental vector fields are therefore (linear combinations of) the vector fields $\tilde{E}_a^{M}$. Since $G$ is supposed to be connected, a function $F$ on $M$ is invariant if, and only if, ${{\tilde E}_a}^{M} (F)=0$.

\paragraph{The implicit Lagrange-Routh equations.}
Let us rewrite the implicit Lagrange equations~\eqref{eq:IL} by making use of a specific anholonomic frame. First we consider the frame $\{Z_\alpha\} = \{ X_i, {\tilde E}_a \}$ on $Q$. This corresponds to the so-called {\bfi moving frame} in literature. We denote the corresponding quasi-velocities and quasi-momenta by $({\rm v}^i,{\tilde{\rm v}}^a)$ and $({\rm p}_i,{\tilde {\rm p}}_a)$. In fact, since the vector fields  $X_i$ are assumed to project onto the coordinate fields on $Q/G$, the quasi-velocities ${\rm v}^i$ can be naturally identified with the natural fiber coordinates $v^i$ on $T(Q/G)$. The curve ${\rm u}^i(t)$ that appears in the equations~\eqref{eq:IL} is then simply the lifted curve ${\dot x}^i(t)$ of the curve $x^i(t)$ in $Q/G$. Similarly, the quasi-momenta ${\rm p}_i$ can be identified with the momenta $p_i$ of $T^*(Q/G)$, but this identification is not canonical since it depends on the choice of the connection $\Lambda$. From now on, for simplicity, we will use the notation $v^i$ and $p_i$ to denote the corresponding quasimomenta.

The brackets of the frame are given by:
\begin{equation*}
[X_i,X_j] = B_{ij}^a {\tilde E}_a, \qquad  [X_i, {\tilde E}_a]=0, \qquad [{\tilde E}_a, {\tilde E}_b] = -C^c_{ab} {\tilde E}_c,
\end{equation*}
where the $B_{ij}^a$ stand, up to a sign, for the curvature coefficients of the principal connection $\Lambda$ (this is the convention in \cite{CrMe2008}, but differs from e.g.~\cite{MaRaSc2000}). The relation $[X_i, {\tilde E}_a]$ is a consequence of the invariance of the $X_i$.

The Lagrangian is invariant if, and only if,  ${{\tilde E}_a}^{M} (L)= {{\tilde E}_a}^{\rm C} (L)=0$. The implicit Euler-Lagrange equations (\ref{eq:IL}) are therefore
 \begin{equation*}
 \begin{split}
 \tilde {\rm v}^a& =\tilde {\rm u}^a, \qquad  \tilde{\rm p}_a={\tilde E}_a^{\rm V}(L), \qquad  {\dot{\tilde{\rm p}}}_a= 0,\\
  v^i & = {\dot x}^i, \qquad  p_i=X_i^{\rm V}(L), \qquad  {\dot p}_i= X_i^{\rm C}(L).
 \end{split}
 \end{equation*}

From the top row, we see that $\tilde{\rm p}_a$ is constant along solutions, say $\tilde{\rm p}_a =\mu_a$, with $\mu=\mu_a E^a \in \mathfrak{g}^*$. We can define a {\bfi generalized Routhian}, as the function on $TQ$ given by
\begin{equation}\label{Routhian}
R^\mu (q,v)= L(q,v) - \mu_a {\tilde {\rm v}}^a.
\end{equation}
It is, however, not the standard definition of the Routhian function as one may find in, for instance, \cite{MaRaSc2000,CrMe2008,LaCaVa2010}. We will clarify the relation between these two definitions later in in \S\ref{sec:cases}. The advantage of the current definition is that it allows us to keep the momentum constraint implicit.

From the relations~\eqref{eq:action} of the previous section, we obtain
 \begin{equation}\label{eq:Rder}
 \begin{split}
  X_i^{\rm C}(R^\mu)&= X_i^{\rm C}(L) + \mu_a B^a_{ij}v^j , \qquad\quad  X_i^{\rm V}(R^\mu) = X_i^{\rm V}(L),  \\
  {\tilde E}_a^{\rm C}(R^\mu)&=   - \mu_c C^c_{ab}{\tilde {\rm v}}^b, \qquad\qquad\qquad  {\tilde E}_a^{\rm V}(R^\mu) = {\tilde E}_a^{\rm V}(L) - \mu_a.
 \end{split}
 \end{equation}

Therefore, a solution of the implicit Euler-Lagrange equations~\eqref{eq:IL} is a curve
\[
(x^i(t),\theta^a(t), v^i(t), \tilde{\rm v}^a(t), p_i(t),\tilde{\rm p}_a(t)):I\subset \mathbb{R}\to M=TQ\oplus T^*Q
\]
satisfying
 \begin{equation}\label{eq:LR}
 \begin{split}
  \tilde{\rm v}^a&=\tilde{\rm u}^a, \qquad   \tilde{E}^{\rm V}_a(R^\mu)  = 0, \qquad \;\;  \tilde{\rm p}_a= \mu_a, \\
  {v}^i&={\dot x}^i, \qquad     X_i^{\rm V}(R^\mu) =p_i, \qquad   {\dot p}_i = X_i^{\rm C}(R^\mu) - \mu_a B^a_{ij}v^j.
 \end{split}
 \end{equation}

We will call these equations the {\bfi implicit Lagrange-Routh equations}. The terminology Lagrange-Routh equations is adopted from \cite{MaRaSc2000}.

\paragraph{The implicit Lagrange-Routh equations in invariant form.}
We will restrict our attention to one specific level set of momentum. Consider the canonical momentum map $J: T^*Q \to \mathfrak{g}^*$ of the $G$-action on $Q$, and fix a value $\mu\in\mathfrak{g}^*$. Let $M_\mu$ denote the submanifold $TQ \oplus J^{-1}(\mu)$ in $M=TQ\oplus T^*Q$. Local coordinates on $M_\mu$ are then $(q,v^i,\tilde{\rm v}^a,p_i)$, the $\tilde{\rm p}_a$ being fixed by the value $\mu\in\mathfrak{g}^*$.

The $G$-action on $M$ restricts to a $G_\mu$-action on $M_\mu$, where $G_\mu$ stands for the isotropy group. We will describe how solutions of the implicit Lagrange-Routh equations~\eqref{eq:LR} which happen to lie on $M_\mu$ can be projected to  curves in $M_\mu/G_\mu$,  satisfying some reduced equations. In order to do that, we need to rewrite them in such a way that all involved terms are given by $G_\mu$-invariant functions. When that is the case, these $G_\mu$-invariant equations will project  to equations on $M_\mu/G_\mu$.

Let $\xi=\xi^a E_a \in \mathfrak{g}$ be an arbitrary element of $\mathfrak{g}$, then it follows from~\eqref{eq:Rder} that ${\tilde \xi}^{\rm C}(R^\mu)=   - \xi^a \mu_c C^c_{ab}\tilde{\rm v}^b$. From this we see that $\tilde{\xi}^{\rm C}(R^\mu)=0$, if and only if, $\xi\in\mathfrak{g}_\mu$. Therefore, we see that $R^\mu$ is (only) $G_\mu$-invariant. The Routhian $R^\mu$ can thus be identified with a reduced function on $TQ/G_\mu$ (for which we shall use the same notation).

We next define local coordinates on $M_\mu/G_\mu$. It is easy to see that the quasi-velocities $v^i$ and the quasi-momenta $p_i$ are $G_\mu$-invariant functions on $M_\mu$. Indeed, from~\eqref{action2} and~\eqref{eq:action}, we have
\[
\tilde{E}^{M}_a(v^i) = 0, \qquad \tilde{E}^{M}_a(p_i) = \overline{[{\tilde E}_a,X_i]} =0.
\]
The last property is based on the observation that $\overline{X_i} = p_i$. The above expressions show that $v^i$ and $p_i$ (thought of as coordinate functions on $M$) are $G$-invariant, and thus also $G_\mu$-invariant functions on $M$ (and therefore also on $M_\mu$). On the other hand, the  quasi-velocities $\tilde{\rm v}^a$ are not $G_\mu$-invariant functions on $M_\mu$ since
\[
\tilde{E}^{M}_a(\tilde{\rm v}^b) = - C^b_{ac} \tilde{\rm v}^c.
\]

To overcome this issue we introduce a new  frame that is completely $G$-invariant (and therefore also $G_\mu$-invariant), see also \cite{CrMe2008}. This coincides in the literature with the so-called {\bfi body-fixed frame}. Consider a new set of vector fields, given by  ${\hat E}_a = A^b_a{\tilde E}_b$. The following reasoning shows that there exists a matrix $(A^b_a)$ of functions on $Q$ for which these vector fields are all invariant. They may be invariant if and only if $0=[{\tilde E}_a,
{\hat E}_b] = \Big({\tilde E}_a(A^c_b) - C^c_{ad}A^d_b\Big) {\tilde
E}_c$. The {\it integrability condition} that is needed for the PDE equation
\[
{\tilde E}_a(A^c_b) - C^c_{ad}A^d_b = 0
\]
to have a solution $A^a_b$ is satisfied by virtue of the Jacobi identity of the Lie bracket on $\mathfrak{g}$. We can therefore claim that, at least locally, the above PDE has a solution  for which $A=(A_a^b)$ is non-singular,
and for which $A$ is the identity on some specified local section of
$\pi: Q \to Q/G$.

An explicit way to define the vector fields ${\hat E}_a$, and the one we will use henceforth, is as follows. Let $U\subset Q/G$ be an open set over which $Q$ is locally trivial. Then the fibration is $\pi: U\times G \to U$, and the action is given by $\psi_g (x,h) = (x,gh)$. We can define
${\hat
E}_a: (x,g) \mapsto \widetilde{\mathrm{Ad}_{g} E}_a(x,g) =
T\psi_g \big({\tilde E}_a (x,e)\big)$, where $\mathrm{Ad}_g:\mathfrak{g}\to \mathfrak{g}$ is the adjoint action. In coordinates, we write
\[
{\hat E}_a = L^b_a \frac{\partial }{\partial \theta^b},
\]
where $L^b_a$ are functions on $Q$.

Another way to think of these two frames is the following: If $Q$ is the Lie group $G$, and the action is given by left multiplication then the tilde-vector fields coincide with a basis of right-invariant vector fields, while the hat-vector fields are all left-invariant.

The quasi-velocities $(v^i,\hat{\rm v}^a)$ with respect to $\{X_i, \hat{E}_a\}$ are all invariant functions on $M$, since now
\[
\tilde{E}^{M}_a(\hat{\rm v}^b) =  0.
 \]
We can therefore take $([q]_{G_\mu}, v^i,\hat{\rm v}^a, p_i)$ for our local coordinates on $M_\mu/ G_\mu$, where $[q]_{G_\mu}$ stands for the coordinates of the orbit of $q$ under the $G_\mu$-action. In a more global interpretation, we have split the quotient $M_\mu/ G_\mu$ by making use of the principal connection $\Lambda$ as
\[
M_\mu/G_\mu \simeq \left(Q/G_\mu \right)\times_{Q/G}\bigl(T(Q/G)\oplus \tilde{\mathfrak{g}} \oplus T^*(Q/G)\bigr),
\]
where $\tilde{\mathfrak{g}}$ is the adjoint bundle. In fact, the coordinates above correspond to fiber coordinates with respect to this identification (see~\cite{LaCa2010} for more details).

We now check whether all the terms that appear in the implicit Lagrange-Routh equations~\eqref{eq:LR} are $G_\mu$-invariant. For example, the function ${\tilde E}^{\rm V}_a(R^\mu) $ is not $G_\mu$-invariant, but the function $\hat {E}_b^{\rm V}(R^\mu) = A_b^a \tilde{E}^{\rm V}_a(R^\mu) $ is. Indeed, for $\xi\in\mathfrak{g}_\mu$,
\[
\xi^c\tilde{E}^{\rm C}_c \left(  \hat{E}^{\rm V}_b(R^\mu) \right) =   \hat{E}^{\rm V}_b \left( \xi^c \tilde{E}^{\rm C}_c  (R^\mu) \right) =0,
\]
where we have used that $ [ \hat{E}^{\rm V}_b, \tilde{E}^{\rm C}_c] = [ \hat{E}_b, \tilde{E}_c]^{\rm C} =0$, and where we have also used that the Routhian is $G_\mu$-invariant. We conclude that we need to replace the equation $\tilde{E}_a^{\rm V}(R^\mu) =0$ in~\eqref{eq:LR} by the equivalent equation $\hat{E}_b^{\rm V}(R^\mu) = A_b^a \tilde{E}_a^{\rm V}(R^\mu) =0$ to obtain a  $G_\mu$-invariant equation. With a similar argument we can show that the functions $X_i^{\rm C}(R^\mu)$ and $X_i^{\rm V}(R^\mu)$ are all $G_\mu$-invariant.

Remark that, since $[X_i,X_j]$ is an invariant vector field, we must have that $[\tilde{E}_c^{\rm C} , B^a_{ij}\tilde{E}_a^{\rm C}] =0 $, or equivalently
\[
\tilde{E}_c^{\rm C} ( B^b_{ij})  - B^a_{ij} C^b_{ca} =0.
\]
Using this, we can now check that the function
$\mu_a B^a_{ij}v^j$ is $G_\mu$-invariant; namely, for $\xi\in\mathfrak{g}_\mu$, we find
\[
\xi^c \tilde{E}_c^{\rm C} (\mu_a B^a_{ij}v^j) = \xi^c \mu_a  C^a_{cd} B^d_{ij}v^j=0.
\]

To conclude, apart from the defining relation $\tilde{\rm p}_a= \mu_a$ for $M_\mu$,  the following system of equations is equivalent to the implicit Lagrange-Routh equations~\eqref{eq:LR} and consists only of  $G_\mu$-invariant equations:
 \begin{equation}\label{eq:invLR}
 \begin{split}
  \hat{\rm v}^a & =\hat{\rm u}^a, \qquad  \hat{E}^{\rm V}_a(R^\mu) = 0,  \\
  v^i& =\dot{x}^i, \qquad  {X}^{\rm V}_i(R^\mu)=p_i, \qquad  \dot{p}_i = {X}^{\rm C}_i(R^\mu) - \mu_a B^a_{ij}v^j.
 \end{split}
 \end{equation}

All these equations correspond to equations on $M_\mu/G_\mu$, whose coordinate expressions will be obtained in the next section.

We finish this paragraph with a coordinate expression for the equations $\hat{\rm v}^a=\hat{\rm u}^a$. Recall first that $\hat{\rm u}^a$ is the quasi-velocity corresponding to the lifted curve $(q,\dot q)$. If we write
\[
\dot x^i\frac{\partial }{\partial x^i} + \dot \theta^a\frac{\partial}{\partial \theta^a} = \dot x^i X_i + (L^{-1})^a_b(\dot\theta^b+\Lambda^b_i\dot x^i)\hat{E}_a,
\]
we can conclude that the equation $\hat{\rm v}^a=\hat{\rm u}^a$ is equivalent with the equation $\hat{\rm v}^b L^a_b = \dot{\theta}^a+  \dot{x}^i \Lambda^a_i$.

\section{Reduction of the implicit Lagrange-Routh equations}\label{sec:redsection}
We shall use the residual $G_\mu$-symmetry of the equations~\eqref{eq:invLR} to drop them to $M_\mu/G_\mu$. To do this, we will make use of an invariant decomposition (with respect to the adjoint action of $G_\mu$) of the Lie algebra $\mathfrak{g}=\mathfrak{g}_\mu\oplus (\mathfrak{g}/\mathfrak{g}_\mu)$ meaning that, for all $g\in G_\mu$, we have $\textcolor{red}{\mathrm{Ad}}_{g}(\mathfrak{g}/\mathfrak{g}_\mu)\subset (\mathfrak{g}/\mathfrak{g}_\mu)$. In terms of coordinates, this splitting will be denoted as follows: we choose a basis $\{E_a\}=\{E_A,E_I\}$ of $\mathfrak{g}$ in such a way that $\{E_A\}$ represents a basis of $\mathfrak{g}_\mu$ and $\{E_I\}$ a basis of $(\mathfrak{g}/\mathfrak{g}_\mu)$.

An invariant splitting $\mathfrak{g}=\mathfrak{g}_\mu\oplus (\mathfrak{g}/\mathfrak{g}_\mu)$ of the Lie algebra as above, together with a principal connection on the bundle $Q\to Q/G$, induces a principal connection on the bundle $Q\to Q/G_\mu$. Indeed, let $\pi_{\mathfrak{g}_\mu}$ denote the projector $\pi_{\mathfrak{g}_\mu}:\mathfrak{g}\to \mathfrak{g}_\mu$ with respect to the previous decomposition. Define the principal connection 1-form $\Lambda^\mu:=\pi_{\mathfrak{g}_\mu}\circ \Lambda:TQ\to \mathfrak{g}_\mu$, where $\Lambda:Q\to\mathfrak{g}$ is the principal connection 1-form on the bundle $Q\to Q/G$. This is a well-defined connection 1-form because
\begin{itemize}
\item[(1)] If $\xi\in\mathfrak{g}_\mu$, then certainly $\Lambda^\mu(\tilde\xi)=\pi_{\mathfrak{g}_\mu}(\xi)=\xi$;
\item[(2)] If $g\in G_\mu$, then since $\mathrm{Ad}_g$ is linear and $\mathrm{Ad}_g(\mathfrak{g}_\mu)=\mathfrak{g}_\mu$, it follows that $\pi_{\mathfrak{g}_\mu}\circ \mathrm{Ad}_g=\mathrm{Ad}_g\circ\pi_{\mathfrak{g}_\mu}$.
\end{itemize}

Therefore, $\Lambda^\mu$ is $G_\mu$-equivariant. Although we will not explicitly make use of it, we mention that an (Ehresmann) connection on $Q/G_\mu\to Q/G$ can be directly obtained by projecting the connection $\Lambda$ (note that it is well-defined due to the equivariance of $\Lambda$). This connection, together with $\Lambda$ and $\Lambda_\mu$, plays a role when looking at variational principles in the context of Routh reduction (see also~\cite{LaCa2010} and~\cite{MaRaSc2000}).

Using that $\mathfrak{g}_\mu$ is a Lie subalgebra, we have $C^J_{AB}=0$. On the other hand, from the definition of $\mathfrak{g}_\mu$, we also get $C^c_{Ab}\mu_c=0$. Finally the fact that the term $(\mathfrak{g}/\mathfrak{g}_\mu)$ is invariant leads to the relation $C^C_{ab}=0$. We will need these relations later in the paper.

We will use coordinates $(\theta^a) = (\theta^A, \theta^I)$ such that the fibers of $G \to G/G_\mu$ are given by $\theta^I= \mbox{constant}$. Then, there are functions $K^a_b$ on $Q$ such that
\[
{\tilde E}_A = K^B_A \frac{\partial }{\partial \theta^B}, \qquad {\tilde E}_I =
K^B_I \frac{\partial}{\partial \theta^B} + K^J_I \frac{\partial}{\partial \theta^J}.
\]
We remark that, by construction, $K^J_A=0$. Likewise, we can set \[
{\hat E}_A = L^B_A \frac{\partial }{\partial \theta^B}, \qquad {\hat E}_I =
L^B_I \frac{\partial}{\partial \theta^B} + L^J_I \frac{\partial}{\partial \theta^J}.
\]

Since $X^{\rm V}_j$ is a $G_\mu$-invariant vector field on $TQ$, it can be thought of as a vector field on $TQ/G_\mu$. This reduced vector field will be completely determined by its action on the coordinate functions $(x^i, \theta^I, v^i, \hat{\rm v}^a)$, which define the coordinates on $TQ/G_\mu$. We have, again using~\eqref{action2} and~\eqref{eq:action},
\[
X^{\rm V}_j(x^i)=0, \quad X_j^{\rm V}(\theta^I)=0, \quad X_j^{\rm V}(v^i) = \delta_j^i,\quad X_j^{\rm V}(\hat{\rm v}^a)=0.
\]
The reduced vector field of $X_j^{\rm V}$ is therefore the vector field $\partial/\partial v^j$ on $M_\mu/G_\mu$. Likewise, the reduced vector fields of $ \hat{E}_a^{\rm V}$ are $\partial/\partial {\hat{\rm v}^a}$. It also follows that
\[
X_j^{\rm C}(x^i)=\delta_j^i, \quad X_j^{\rm C}(\theta^I)=-\Lambda_j^I, \quad X_j^{\rm C}(v^i) = 0  ,\quad X_j^{\rm C}(\hat{\rm v}^a)= -\hat B^a_{jk}v^k ,
\]
where $[X_i,X_j]= \hat B^a_{ij}{\hat E}_a$ (by construction, we have $\hat B^a_{ij}L^b_a=B^a_{ij}K^b_a$, or equivalently $ B^a_{ij} = A^a_b \hat B^b_{ij}$). Therefore the reduction of $X_j^{\rm C}$ to a vector field on $TQ/G_\mu$ is
\[
 \frac{\partial}{\partial x^j} - \Lambda_j^I \frac{\partial}{\partial \theta^I}  - \hat B^a_{jk}v^k \frac{\partial}{\partial \hat{\rm v}^a}.
\]
Summarizing, we have proved the following:
\begin{proposition}
 A curve $(x^i(t),\theta^I(t),v^i(t),\hat{\rm v}^a(t), p_i(t))$ in $M_\mu/G_\mu$ is a solution of the {\bfi reduced implicit Lagrange-Routh equations} if it satisfies:
\begin{equation} \label{LRred}
\begin{split}
&{\dot x}^i=v^i, \quad\;\; {\dot \theta}^I=\hat{\rm v}^J L^I_J - {\dot x}^i \Lambda^I_i, \quad\;\;
{\dot p}_i=\frac{\partial R^\mu}{\partial x^i} - \Lambda^I_i \frac{\partial R^\mu}{\partial \theta^I} - \mu_a B_{ij}^a {\dot x}^j, \\
& p_i = \frac{\partial R^\mu}{\partial v^i}, \quad\quad \frac{\partial R^\mu}{\partial \hat{\rm v }^a} =0.
\end{split}
\end{equation}
\end{proposition}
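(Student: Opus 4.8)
The plan is to push the $G_\mu$-invariant system \eqref{eq:invLR} down to $M_\mu/G_\mu$ and read off the result in the chart $(x^i,\theta^I,v^i,\hat{\rm v}^a,p_i)$. The preparatory work of Section~\ref{sec:implicit} guarantees that every ingredient of \eqref{eq:invLR} is $G_\mu$-invariant: the Routhian $R^\mu$, the coordinate functions $v^i$ and $p_i$, the term $\mu_aB^a_{ij}v^j$, the operators $X_i^{\rm V}$, $\hat E_a^{\rm V}$, $X_i^{\rm C}$ acting on $R^\mu$, and the kinematic relations $v^i=\dot x^i$ and $\hat{\rm v}^a=\hat{\rm u}^a$. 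Since the $G_\mu$-action on $M_\mu$ is free and proper, the quotient map $M_\mu\to M_\mu/G_\mu$ is a surjective submersion and all of these descend, so a curve in $M_\mu$ solving \eqref{eq:invLR} projects to a curve in $M_\mu/G_\mu$ solving the descended equations, and conversely every solution downstairs lifts. Thus the proof is a matter of computing coordinate expressions for the descended equations.

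First I would record the reduced vector fields, already obtained in Section~\ref{sec:redsection}: $X_i^{\rm V}$ reduces to $\partial/\partial v^i$, $\hat E_a^{\rm V}$ reduces to $\partial/\partial\hat{\rm v}^a$, and $X_i^{\rm C}$ reduces to $\partial/\partial x^i-\Lambda_i^I\,\partial/\partial\theta^I-\hat B^a_{ik}v^k\,\partial/\partial\hat{\rm v}^a$, where $R^\mu$ is viewed as the reduced function on $TQ/G_\mu$. Applying the first two to $R^\mu$ turns the fiber equations of \eqref{eq:invLR} into $p_i=\partial R^\mu/\partial v^i$ and $\partial R^\mu/\partial\hat{\rm v}^a=0$. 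For the evolution of $p_i$ I would write
\[
X_i^{\rm C}(R^\mu)=\frac{\partial R^\mu}{\partial x^i}-\Lambda_i^I\frac{\partial R^\mu}{\partial\theta^I}-\hat B^a_{ik}v^k\frac{\partial R^\mu}{\partial\hat{\rm v}^a},
\]
observe that along a solution the last term drops because of the constraint $\partial R^\mu/\partial\hat{\rm v}^a=0$ just derived, and replace $v^j$ by $\dot x^j$ in the curvature term $\mu_aB^a_{ij}v^j$; this yields $\dot p_i=\partial R^\mu/\partial x^i-\Lambda_i^I\partial R^\mu/\partial\theta^I-\mu_aB^a_{ij}\dot x^j$, the third equation of \eqref{LRred}.

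For the kinematics, $v^i=\dot x^i$ gives ${\dot x}^i=v^i$ at once, since ${\rm u}^i$ is the lifted curve $\dot x^i$. For $\hat{\rm v}^a=\hat{\rm u}^a$ I would use the coordinate form established at the end of Section~\ref{sec:implicit}, namely $\hat{\rm v}^bL^a_b=\dot\theta^a+\dot x^i\Lambda^a_i$. Its component $a=I$, combined with $L^I_A=0$ (forced by the block structure ${\hat E}_A=L^B_A\,\partial/\partial\theta^B$), becomes $\hat{\rm v}^JL^I_J=\dot\theta^I+\dot x^i\Lambda^I_i$, that is ${\dot\theta}^I=\hat{\rm v}^JL^I_J-\dot x^i\Lambda^I_i$; the components $a=A$ concern only the fiber coordinates $\theta^A$ along the $G_\mu$-orbits and are absorbed in passing to the quotient. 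Assembling these five relations gives exactly \eqref{LRred}.

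I do not expect a genuine obstacle: the statement is essentially bookkeeping, and all of its substance — the $G_\mu$-invariance of $R^\mu$ and of \eqref{eq:invLR}, and the explicit reduced vector fields — has been secured in Sections~\ref{sec:implicit} and~\ref{sec:redsection}. The only point demanding a word of care is the step where, keeping the momentum constraint implicit, one uses the freshly derived equation $\partial R^\mu/\partial\hat{\rm v}^a=0$ to simplify $X_i^{\rm C}(R^\mu)$: this is admissible precisely because we are characterising solution curves, on which that relation holds identically, and one should remark that retaining the extra term would give an equivalent but less tidy system.
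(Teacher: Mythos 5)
Your proposal is correct and follows essentially the same route as the paper: compute the $G_\mu$-reduced vector fields $X_i^{\rm V}\mapsto\partial/\partial v^i$, $\hat E_a^{\rm V}\mapsto\partial/\partial\hat{\rm v}^a$, $X_i^{\rm C}\mapsto\partial/\partial x^i-\Lambda_i^I\partial/\partial\theta^I-\hat B^a_{ik}v^k\,\partial/\partial\hat{\rm v}^a$ and push the invariant system \eqref{eq:invLR} down to $M_\mu/G_\mu$. You in fact make explicit two small points the paper leaves tacit — that the $\hat B^a_{ik}v^k\,\partial R^\mu/\partial\hat{\rm v}^a$ term in $X_i^{\rm C}(R^\mu)$ drops on solution curves because of the constraint $\partial R^\mu/\partial\hat{\rm v}^a=0$, and that the $a=A$ components of $\hat{\rm v}^bL^a_b=\dot\theta^a+\dot x^i\Lambda^a_i$ are precisely the reconstruction equations — which is a welcome clarification rather than a deviation.
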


None of the above equations depends  explicitly on $\theta^A$. Therefore, the above equations determine the reduced curve on $M_\mu/G_\mu$.

Once we have solved these reduced equations for $(x^i(t),\theta^I(t),v^i(t),\hat{\rm v}^a(t),p_i(t))$ on $M_\mu/G_\mu$, we can recover the complete solution $(x^i(t),\theta^I(t),\theta^A(t),v^i(t),\hat{\rm v}^a(t), p_i(t), \tilde{\rm p}_a(t))$ on $M$ by solving the reconstruction equations
\[
{\dot \theta}^A=\hat{\rm v}^a L^A_a - {\dot x}^i \Lambda^A_i, \quad \tilde{\rm p}_a = \mu_a.
\]
The first  equation can only be given a geometrically concise interpretation if we make use of a principal connection on the principal bundle $TQ\to TQ/G_\mu$, but we will not go into the details of this procedure (we refer the interested reader to~\cite{CrMe2008} for a similar situation in the case of standard Routh reduction).

\section{Special cases}\label{sec:cases}

In this section, we shall obtain the implicit Lagrange-Routh equations in some particular cases and relate the resultant equations, when possible, to the results derived elsewhere in the literature on Routh reduction.

\paragraph{The regular case.}
Let us consider the case where the Lagrangian is regular with respect to the group variables. More precisely, a Lagrangian is said to be {\em $G$-regular} if the Hessian $(\tilde{E}^{\rm V}_a(\tilde{E}_b^{\rm V}(L)))$ is non-singular everywhere on $TQ$, which in coordinates can be expressed as
\[
\det \left(\frac{\partial ^2 L}{\partial \theta^a\partial\theta^b}\right)\neq 0.
\]
 $G$-regularity is probably the weakest condition that the Lagrangian should satisfy to have an analogy with the classical procedure of Routh. In this case, one of the implicit Euler-Lagrange equations (\ref{eq:invLR}), namely the relation  $\hat{E}_b^{\rm V}(R^\mu) =0$ (or, equivalently, $\tilde{E}_a^{\rm V}(L) = \mu_a$) can locally be rewritten in either one of the following explicit forms  $\tilde{\rm v}^a = \tilde{\iota}^a_\mu(q,v^i)$ or  $\hat{\rm v}^a = \hat{\iota}^a_\mu(q,v^i)$, where $\tilde{\iota}^a_\mu, \hat{\iota}^a_\mu$ are smooth functions of $(q,v^i)$. This defines a submanifold $N_\mu$ of $TQ$, with inclusion $\iota_\mu$. We can introduce the new function ${\bar R}^\mu = R^\mu\circ \iota_\mu$ on $N_\mu$. This is the function which is commonly called the {\bfi Routhian} (see, e.g.,\ the papers \cite{CrMe2008,LaCaVa2010,MaRaSc2000}). It is easy to see that, for its reduced version on $N_\mu/G_\mu$, we obtain
\[
\frac{\partial \bar {R}^\mu}{\partial x^i} = \left(\frac{\partial {R}^\mu}{\partial x^i} \circ\iota_\mu\right) + \left( \frac{\partial {R}^\mu}{\partial \hat{\rm v}^a} \circ\iota_\mu\right) \frac{\partial {\hat\iota}^a_\mu}{\partial x^i}.
\]

In view of the fact that $\partial{{ R}^\mu}/\partial{\hat{\rm v}^a} =0$ is part of the reduced implicit Lagrange-Routh equations (\ref{LRred}), every instance of $\partial{{R}^\mu}/\partial{x^i}$ in (\ref{LRred})
can be replaced by $\partial{{\bar R}^\mu}/\partial{x^i}$, and similarly for $\partial{{\bar R}^\mu}/\partial{v^i}$.
The remaining reduced equations are then simply given by
\begin{equation} \label{regred}
{\dot x}^i=v^i, \quad {\dot \theta}^I={\tilde\iota}^J_\mu(q,v^i) K^I_J - {\dot x}^i \Lambda^I_i, \quad
\frac{d}{dt}\left(\frac{\partial {\bar R}^\mu}{\partial v^i}\right)=\frac{\partial {\bar R}^\mu}{\partial x^i} - \Lambda^I_i \frac{\partial {\bar R}^\mu}{\partial \theta^I} - \mu_a B_{ij}^a {\dot x}^j.
\end{equation}
The above equations can be found in~\cite{CrMe2008}.

An even stronger regularity condition is the one used in \cite{LaCaVa2010}. Define for each $v_q\in T_qQ$ a map $\mathcal{J}_L^{v_q}:\mathfrak{g}\to \mathfrak{g}^*$ as follows:
\begin{align*}
\mathcal{J}_L^{v_q}:\mathfrak{g}\to \mathfrak{g}^*\,;\;
\xi\mapsto J_L\left(v_q + \tilde\xi(q)\right).
\end{align*}
In the above, $J_L: TQ \to \mathfrak{g}^{\ast}$ is a standard momentum map on $TQ$ defined by $J_L=J \circ \mathbb{F} L$, where $J: T^{\ast}Q \to \mathfrak{g}^{\ast}$ is the standard momentum map. If we assume that $\mathcal{J}_L^{v_q}$ is a diffeomorphism for every $v_q\in TQ$, it has been shown in \cite{LaCaVa2010} that it is possible to realize the previous equations as the symplectic reduction of the original Lagrangian system. Simple mechanical systems, for which the Lagrangian is of the form $L=T-V$ with $T$ given by a Riemannian metric on $Q$, satisfy automatically this stronger form of $G$-regularity (this follows easily from positive-definiteness of the metric). For a detailed study of the Routh reduction for simple mechanical systems, see~\cite{MaRaSc2000}.

\paragraph{The Abelian case.}
When the group of symmetries is Abelian we have $G_\mu=G$. In this case, there are no coordinates $\theta^I$ and we can just write $\theta^a$ everywhere. The equations for the curve $(x^i(t),v^i(t),\hat{\rm v}^a(t), p_i(t))$ become
\[
{\dot x}^i=v^i, \quad {\dot p}_i=\frac{\partial R^\mu}{\partial x^i}  - \mu_a B_{ij}^a {\dot x}^j, \quad p_i = \frac{\partial R^\mu}{\partial v^i}, \quad \frac{\partial R^\mu}{\partial \hat{\rm v }^a} =0.
\]
Note that even when the group is Abelian, there remains a curvature term in the equations. From the reconstruction equations
\[
{\dot \theta}^a=\hat{\rm v}^b L^a_b - {\dot x}^i \Lambda^a_i, \quad \tilde{\rm p}_a = \mu_a,
\]
we can determine $\theta^a(t)$ and $\tilde{\rm p}_a(t)$. In the case where $L$ is $G$-regular, the manifold $N_\mu$ can be identified with $T(Q/G)$ and the reduced equations (\ref{regred}) can be regarded as the Euler-Lagrange equations of the Routhian ${\bar R}^\mu$ on $T(Q/G)$ subjected to a {\it gyroscopic term} arising from the curvature of $\Lambda$.

An important case of an Abelian symmetry group is that when the Lagrangian has {\it cyclic coordinates}. In this case we have a configuration manifold that is a product $Q=S \times G$, and the Lagrangian is assumed to be invariant under the action of $G$ on the second factor. The equations for $(x^i(t),v^i(t),\hat{\rm v}^a(t), p_i(t))$ become then
\[
{\dot x}^i=v^i, \quad {\dot p}_i=\frac{\partial R^\mu}{\partial x^i} , \quad p_i = \frac{\partial R^\mu}{\partial v^i}, \quad \frac{\partial R^\mu}{\partial \hat{\rm v }^a} =0.
\]
Again, if we have $G$-regularity, these might be further simplified to yield the Euler-Lagrange equations of ${\bar R}^\mu$ (with no gyroscopic term).
\bigskip

\section{Routh-Dirac reduction}\label{sec:Dirac}

In this section, we shall show how the reduced implicit Lagrange-Routh equations can be obtained as a reduced Lagrange-Dirac dynamical system. We will call this type of reduction {\bfi Routh-Dirac reduction}. The procedure we follow relies on some results known for implicit Hamiltonian systems. Again, we assume the same setting as before: a free and proper action of a Lie group $G$ on $Q$, which leaves the Lagrangian $L$ invariant.

\paragraph{Dirac dynamical system.}
We refer to \cite{Co1990,Do1993} for the original works on the notion of a Dirac structure and to \cite{New2,YoMa2006a,CeRaYo2015} for more details on Dirac structures and the dynamics associated to them.
 A linear Dirac structure on a vector space $V$ is a subspace $D_V$ of $V\oplus V^*$ which is a Lagrangian subspace with respect to the pairing
\[
 \langle\langle (v_1,\alpha_1), (v_2,\alpha_2) \rangle\rangle \,= \langle \alpha_2,v_1 \rangle + \langle \alpha_1,v_2 \rangle.
\]
A Dirac structure on a manifold $M$ is  a subbundle $D_M\subset TM\oplus T^*M$ such that $D_M(m)\subset T_mM\times T_m^*M$ is a linear Dirac structure on $T_mM$ at each $m\in M$. In what follows, we will use the terminology {\it Dirac dynamical system}, as in \cite{CeRaYo2015}, to refer to a wide class of implicit Lagrangian or Hamiltonian systems that can be defined in the context of Dirac structures. Given an energy form  $\varphi\in \Gamma(T^*M)$, the dynamics of the {\bfi Dirac dynamical system} $(\varphi, D_{M})$ is given by the following condition on a curve $c:I\subset \mathbb{R}\to M$:
\[
\dot c(t)\oplus  \varphi\left(c(t)\right)\in D_M(c(t)).
\]

\paragraph{Implicit Euler-Lagrange equations.}
Using the above, we can obtain the implicit Euler-Lagrange equations, by taking $M$ to be the Pontryagin bundle $M=TQ\oplus T^*Q$ over $Q$. In order to construct $D_{M}$ we consider  the pullback $\Omega_M =\pi_{T^{\ast}Q}^*\Omega_{T^{\ast}Q}$ of the canonical symplectic form $\Omega_{T^{\ast}Q} =-\mathbf{d}\theta_{T^{\ast}Q} $ on $T^*Q$ to $M$ by using the canonical projection $\pi_{T^{\ast}Q}: M \to T^*Q$. Then, $\Omega_M$  is a presymplectic form on $M$, which naturally defines a Dirac structure $D_M$ on $M$ by means of the graph of $\Omega_{M}$: for each $(q,v,p) \in M$, set
\begin{equation*}
D_{M}(q,v,p)
=
\{ ((\delta{q}, \delta{v}, \delta{p}), (\alpha, \beta, \gamma)) \in TM \oplus T^{\ast} M \mid
 \alpha + \delta{p} = 0,\,\gamma = \delta{q},\, \beta = 0 \}.
\end{equation*}
The {\it generalized energy function} $\mathcal{E}_L(q,v,p) =\langle p,v\rangle - L(q,v)$ on $M$ defines the energy form  $\mathbf{d}\mathcal{E}_L$. When we express the two-form $\Omega_{T^{\ast}Q}$ in natural coordinates as ${d}q^\alpha \wedge {d} p_\alpha$ it is  easy to check that a curve $c(t)=(q(t),v(t),p(t))$ in $M$ is a solution of the implicit Euler-Lagrange equations (\ref{eq:IL1}) if $(\mathbf{d}\mathcal{E}_L, D_{M})$ satisfies the so-called Lagrange-Dirac dynamical system
\begin{equation}\label{eq:DIL}
{\dot c}(t) \oplus \mathbf{d}\mathcal{E}_L(c(t)) \in D_M(c(t)),\quad \text{for all $t$}.
\end{equation}
On the other hand, when we write the generalized energy in quasi-velocities and quasi-momenta as  $\mathcal{E}_{L}={\rm p}_\alpha {\rm v}^\alpha - L$  and $\mathbf{d}\mathcal{E}_L={\rm v}^\alpha d{\rm p}_\alpha+{\rm p}_\alpha d{\rm v}^\alpha-\mathbf{d}L$, and likewise the two-form $\Omega_{T^{\ast}Q}$  as
\begin{equation}
\Omega_{T^{\ast}Q} =   W^\alpha \wedge d{\rm p}_\alpha   + \frac{1}{2}R^\alpha_{\beta\gamma} {\rm p}_\alpha W^\beta \wedge W^\gamma\label{eq:omega}
\end{equation}
we may easily obtain from (\ref{eq:DIL}) the implicit Euler-Lagrange equations with quasi-velocities and quasi-momenta (\ref{eq:IL}). Note that $W^\alpha$ has been introduced before as a 1-form on $Q$, while in the above we think of $W^\alpha$ as a (semi-basic) 1-form on $T^*Q$. We will use this slight abuse of notation from now on.

\paragraph{Reduction of Dirac structures.} In this paragraph we assume that $M$ is an arbitrary manifold, endowed with a Dirac structure $D_M$. We first recall some generalities on the reduced Dirac structure induced by a Lie group action. The results here were originally developed in \cite{Va1998,BlVa2001}.
\medskip

Assume that a Lie group $G$ acts (freely and properly) on the manifold $M$, and denote by $\phi_g(m)=g\cdot m$ the action of $g\in G$ on a point $m\in M$. This action lifts naturally by tangent and cotangent lifts to $TM\oplus T^*M$. Assume also that  the action admits an equivariant momentum map relative to $D_{M}$, that is, assume that there exists a $G$-equivariant map $J^M : M \rightarrow \mathfrak{g}^\ast $ such that $ \tilde{\xi} \oplus \mathbf{d}J^M_\xi  \in D_{M}$, for all $\xi \in \mathfrak{g}  $, where we recall that $\tilde\xi$ stands for the infinitesimal generator of the action associated with $\xi \in \mathfrak{g}$ and  $J^M_{\xi}$ is the smooth function on $M$ defined by $J^M_{\xi}(m)=\left<J^M(m), \xi \right>,\;m \in M$.

 We assume now that $D$ is invariant under $G$. The Dirac reduction procedure will be carried out in the following two steps. First, if $ \mu \in \mathfrak{g}^\ast $ is a regular value of $J^M$, then $M_{\mu}=(J^M)^{-1} (\mu) \subset M$ is a submanifold. If the vector subspace $D_{M}(m) \cap (T_m {M_{\mu}} \times T^*_m M|_{M_{\mu}})\subset T_m M_{\mu} \times T^*_m M|_{M_{\mu}}$ has constant dimension at each $m\in M_{\mu}$, then these vector spaces naturally induces a {\bfi restriction of the Dirac structure} $D_{M_{\mu}} \subset TM_{\mu} \oplus T^*M_{\mu}$. Second, one observes that the Dirac structure $D_{M_{\mu}}$ is $G _\mu $-invariant since
$$
D_{M_{\mu}}({g\cdot m})=g\cdot D_{M_{\mu}}(m),
$$
where $G_ \mu =\{g \in G \mid \operatorname{Ad}^*_ g \mu = \mu \}$ is the coadjoint isotropy subgroup  of $ \mu $. This leads to a {\bfi reduced Dirac structure} $D_{M_{\mu}/G _\mu } \subset T(M_{\mu}/G _\mu ) \oplus  T^\ast (M_{\mu}/G _\mu )$ on the reduced space $M_{\mu}/G _\mu = J^{-1} (\mu) /G _\mu $, which is given by
\begin{equation}\label{eq:Rstruc}
\begin{aligned}
D_{M_{\mu}/G _\mu }:&=\{(X, \alpha ) \in \mathfrak{X}(M_{\mu}/G _\mu ) \times \Omega^1(M_{\mu}/G_ \mu )\mid \exists \,(Y, \beta  )\in D_{M}, \\
& \qquad \qquad \qquad \qquad \qquad \qquad \;\text{such that}\; T \pi_\mu \circ Y = X \circ \pi_\mu , \; \pi_\mu ^\ast \alpha = \beta  \},
\end{aligned}
\end{equation}
where $\pi_\mu:M_\mu\to M_\mu/G_\mu$ is the canonical projection, which is a surjective submersion.

\paragraph{Symmetry reduction of implicit Hamiltonian systems.}
An important case of Dirac dynamical systems is that when the energy section is given by the differential of a Hamiltonian function $H$ on $M$. Let $H$ be a $G$-invariant Hamiltonian on $M$ and let  $c(t)$ be  a solution curve for the {\bfi implicit Hamiltonian system} $(\mathbf{d}H, D_{M})$, i.e.\ a curve that satisfies
$$
\dot{c}(t) \oplus \mathbf{d}H(c(t)) \in D_{M}(c(t)).
$$
Then,
$$
\frac{dJ^M_{\xi}}{dt}(c(t))=\left< \mathbf{d}J^M_{\xi}(c(t)), \dot{c}(t)\right>=-\left< \mathbf{d}H, \tilde{\xi}\right>(c(t))=0, \;\; \textrm{for all $t$ and $\xi \in \mathfrak{g}$.}
$$
So, $J^M_{\xi}$ is a {\it first integral} of the implicit Hamiltonian system and we can restrict the Dirac dynamical system $(\mathbf{d}H, D_{M})$ to $(\mathbf{d} H_{\mu}, D_{M_{\mu}})$, where $H_{\mu}=H|_{M_{\mu}}$. Next, we can reduce the restricted implicit Hamiltonian system $(\mathbf{d}H_{\mu}, D_{M_{\mu}})$ to obtain the {\bfi reduced implicit Hamiltonian system} $(\mathbf{d}\mathfrak{H}_{\mu}, D_{M_{\mu}/G_{\mu}})$, which satisfies
\begin{equation} \label{eq:redHam}
\dot{\mathfrak{c}}(t) \oplus \mathbf{d}\mathfrak{H}_{\mu}(\mathfrak{c}(t))) \in D_{M_{\mu}/G_{\mu}}(\mathfrak{c}(t),
\end{equation}
for each $\mathfrak{c}(t)=\pi_{\mu}(c(t))$ in $M_{\mu}/G_{\mu}$, where $\mathfrak{H}_{\mu} \circ \pi=H_{\mu}$ is the reduced Hamiltonian on $M_{\mu}/G_{\mu}$.

More details on this reduction of Dirac structures and its associated reduced implicit Hamiltonian systems can also be found in \cite{BlRa2004}.

\paragraph{The reduced implicit Lagrange-Routh equations.}
We consider again the Dirac structure $D_M$ on the Pontryagin bundle $M=TQ\oplus T^*Q$ given by the graph of the presymplectic form $\Omega_M=\pi_{T^{\ast}Q}^*\Omega_{T^{\ast}Q}$. For $J^M: M \to \mathfrak{g}^\ast$, we take $J^M=J\circ\pi_{T^\ast Q}$, where $J: T^\ast Q \to \mathfrak{g}^\ast$ stands for the standard momentum map on $Q$. The goal of this paragraph is to demonstrate that, in this particular setting, the reduced implicit Hamiltonian system (\ref{eq:redHam}), with Hamiltonian $H={\mathcal E}_L$, is nothing but the system given by the reduced implicit Lagrange-Routh equations (\ref{LRred}).

It is well-known that if $D_{M}$ is a Dirac structure given by the graph of a symplectic form $\Omega_{M}$ the reduced Dirac structure $D_{M_{\mu}/G _\mu }$ may be given by the graph of a reduced symplectic form $\Omega_{M_{\mu}/G _\mu }$. The following observations may be obtained:
\begin{itemize}
\item[i)]
The action of $G$ on $M=TQ\oplus T^*Q$ restricts to a $G_\mu$-action on $M_\mu$ by tangent and cotangent lifts, and this action leaves the presymplectic form $\Omega_{M_\mu}$ invariant. Indeed, since $\Omega_{M_\mu}=\imath^*\Omega_M$ (where $\imath:M_\mu\to M$ is the inclusion), its invariance follows directly from the $G_\mu$-invariance of $\Omega_{M}$.

\item[ii)] Moreover, one can show that the form $\Omega_{M_\mu}$ drops to $M_\mu/G_\mu$. Indeed, it suffices to check that $\Omega_{M_\mu}$ annihilates vectors which are vertical to the fibration $\pi_\mu:M_\mu\to M_\mu/G_\mu$. Thus the {\it reduced presymplectic form} $\Omega_{{M_\mu}/{G_\mu}}$ is defined on $M_\mu/G_\mu$ from the invariance of $\Omega_{M_\mu}$.

\item[iii)]
It follows immediately that $D_{M_\mu}$ is $G_\mu$-invariant. In particular, we can define a {\bfi reduced Dirac structure} $D_{{M_\mu}/{G_\mu}}$ on $M_\mu/G_\mu$,
    \[
   D_{{M_\mu}/{G_\mu}}\subset T(M_\mu/G_\mu)\oplus T^*(M_\mu/G_\mu)
    \]
such that
\[
\pi^{\ast}_{\mu}D_{M_{\mu}/G_{\mu}}=\iota^{\ast}D_{M}.
\]
Note that the above characterization of $D_{M_\mu/G_\mu}$ can also be obtained in the standard cases of symplectic and presymplectic reduction by the well-known characterization of the reduced (pre)symplectic form, to be found in \cite{MaWe1974} and \cite{EcMuRo1999}). We plan to investigate whether the same result may also hold for almost Dirac structures with regular distributions.
\end{itemize}

To write down the reduced system of the implicit Hamilton system~\eqref{eq:DIL}, we first need to give a coordinate expression of the two 2-forms $\Omega_{M_\mu}$ and $\Omega_{M_\mu/G_\mu}$ whose graph define the Dirac structures $D_{M_\mu}$ and $D_{M_\mu/G_\mu}$.

Using a principal connection $A$ on the bundle $\pi:Q\to Q/G$, one identifies $J^{-1}(\mu)\simeq T^*(Q/G)\times_{Q/G}Q$. Under this identification the presymplectic form reads
\[
\Omega_{M_\mu}=\Omega_{Q/G} - \mathbf{d}A_\mu,
\]
where we have used a slight abuse of notations and omitted the pullbacks from the spaces $T^*(Q/G)$ and $Q$ where the forms $\Omega_{Q/G} $ and $\mathbf{d}A_\mu$ are defined, respectively. From Cartan's structure equation, it follows $\mathbf{d}A_\mu=\langle \mu,[A,A]\rangle-B_\mu$. Therefore, in coordinates, we have
\begin{equation}\label{eq:omega_N}
\Omega_{M_\mu}  =   {d}x^i \wedge {d}p_i + \frac{1}{2}\mu_a \left( B^a_{ij} {d}x^i \wedge {d}x^j -  C^a_{bc} {\tilde E}^b \wedge {\tilde E}^c \right),
\end{equation}
where $\{dx^i, {\tilde E}^a\}$ stand for the dual of the basis $\{X_i, {\tilde E}_a\}$ and where we have identified $X^i=dx^i$ in the notations of the previous sections. We point out that, again, we do not write explicitly the pullbacks: one should think of the forms ${\tilde E}^a$ in~\eqref{eq:omega_N} as semi-basic forms on $T^*Q$ or, equivalently, as the vertical lifts $(\tilde E^a)^{\rm V}$ to $T^*Q$ of the corresponding forms in $Q$ (see~\cite{YaIs1973} for more details). To ease the notation we will keep this convention from now on since the coordinate expressions agree, unless there is risk of confusion. The expression~\eqref{eq:omega_N} is a particular instance of the expression~\eqref{eq:omega} in the current frame.

Let $R^{\mu}$ be the generalized Routhian given in \eqref{Routhian} and consider the energy section determined by the differential of the energy $\mathcal{E}_L$ restricted to $M_\mu$. Writing $\mathcal{E}_\mu=\imath_\mu^*\mathcal{E}_L$ in terms of the Routhian as $\mathcal{E}_\mu=p_iv^{i}-R^\mu$, and using a formula similar to the one we had for calculating variations of $L$ in terms of the anholonomic frame (see~\eqref{eq:varL}), it follows
\begin{align}\label{eq:d_energy}
\mathbf{d}\mathcal{E}_\mu=&-\left(X_i^{\rm C}(R^\mu)+ \tilde{E}_a^{\rm V}(R^\mu)B^a_{ij}v^j\right)dx^i - \left(X_i^{\rm V}(R^\mu)-p_i\right)dv^i \nonumber \\
&\hspace{2cm}+\left(\mu_a+\tilde{E}_a^{\rm V}(R^\mu)\right)C^a_{bc}\tilde{\rm v}^c\tilde E^b - \tilde{E}_a^{\rm V}(R^\mu)d \tilde{\rm v}^a + v^idp_i,
\end{align}
where we have used the relation $\tilde{E}^{\rm C}_a(R^\mu)=   - \mu_c C^c_{ab}\tilde{\rm v}^b$ from~\eqref{eq:Rder}.

\medskip

Recall that $D_{M_\mu}\subset TM_\mu\oplus T^*M_\mu$ is the Dirac structure induced from $\Omega_{M_\mu}$. Before we continue with the expression for $\Omega_{M_\mu/G_\mu}$, it is instructive to have a look at the first step in the reduction, namely the restriction of the dynamics to $M_\mu$. Consider for that reason the Lagrange-Dirac dynamical system $(\mathbf{d}\mathcal{E}_{\mu}, D_{M_\mu})$, which satisfies, for each $c(t) = (x^i(t), \theta^a(t), v^i(t), \tilde{\rm v}^a(t), p_i(t) )$ in $M_\mu$,
$$
\dot c(t) \oplus \mathbf{d}\mathcal{E}_\mu\left(c(t)\right)\in \left(D_{M_\mu}\right)_{\left(c(t)\right)}.
$$
It leads to the following set of equations:
\begin{eqnarray*}
&& \dot x^i=v^i,\qquad  \tilde{E}_a^{\rm V}(R^\mu)=0,\qquad  X_i^{\rm V}(R^\mu)-p_i=0,\\
&& \mu_aB^a_{ij}\dot x^j + \dot p_i = X_i^{\rm C}(R^\mu)+\tilde{E}_a^{\rm V}(R^\mu)B^a_{ij}v^j, \qquad  \left(\mu_a+\tilde{E}_a^{\rm V}(R^\mu)\right)C^a_{bc}\tilde{\rm v}^c =\mu_aC^a_{bc}\tilde{\rm u}^c,
\end{eqnarray*}
where $\tilde{\rm u}^b = (K^{-1})^b_a (\dot{\theta}^a+  \dot{x}^i \Lambda^a_i)$. The above equations might be further simplified as
 \begin{equation}\label{eq:RD}
 \begin{split}
 &\mu_aC^a_{bc}\tilde{\rm v}^c=\mu_aC^a_{bc}\tilde{\rm u}^c, \qquad  \tilde{E}_a^{\rm V}(R^\mu) = 0,  \\
 &v^i=\dot{x}^i, \qquad  X_i^{\rm V}(R^\mu)=p_i, \qquad  \dot{p}_i= X_i^{\rm C}(R^\mu) - \mu_a B^a_{ij}v^j.
 \end{split}
 \end{equation}

The similarity with the implicit Lagrange-Routh equations~\eqref{eq:LR} is obvious, with the only difference that it is not possible to conclude from equations~\eqref{eq:RD} that $\tilde{\rm v}^c=\tilde{\rm u}^c$. This is a consequence of the fact that solutions of a presymplectic equation are only determined up to elements in the kernel of the presymplectic form (see~\cite{GoNe1979}). Indeed, considering the splitting $\mathfrak{g}=\mathfrak{g}_\mu\oplus (\mathfrak{g}/\mathfrak{g}_\mu)$ introduced earlier in \S\ref{sec:redsection}, the relation $\mu_aC^a_{bc}(\tilde{\rm v}^c-\tilde{\rm u}^c)=0$ implies that $\tilde{\rm v}^I=\tilde{\rm u}^I$, but it is not true in general that also $\tilde{\rm v}^A=\tilde{\rm u}^A$. This is reminiscent of the standard case of symplectic reduction on $T^*Q$; if $\imath_\mu:J^{-1}(\mu)\to T^*Q$ denotes the inclusion, then the kernel of the presymplectic form $\Omega_\mu=\imath_\mu^*\Omega_Q$ is given by $\ker \Omega_\mu=\{\tilde E\mid E\in\mathfrak{g}_\mu\}$.
\medskip

The equations (\ref{eq:RD}) only refer to the restriction step in the reduction process of an implicit Hamilton system. As explained before, in the second step, we need to reduce that system by $G_\mu$. This boils down to looking at the Dirac structure defined by the graph of $\Omega_{M_\mu/G_\mu}$.

\begin{proposition}
Let $\mathfrak{E}_\mu$ be the reduced energy function on $M_\mu/G_\mu$ defined by $\mathfrak{E}_\mu \circ \pi_{\mu}=\mathcal{E}_{\mu}$.
Given the Dirac structure $D_{{M_\mu}/{G_\mu}}$, a curve $\mathfrak{c}(t)=\pi_{\mu}(c(t))$ in ${M_\mu}/{G_\mu}$ is a solution curve of the {\bfi Routh-Dirac dynamical system} $(\mathbf{d}\mathfrak{E}_\mu, D_{{M_\mu}/{G_\mu}})$, which  satisfies
$$
\dot {\mathfrak{c}}(t) \oplus \mathbf{d}\mathfrak{E}_\mu\left(\mathfrak{c}(t)\right)\in \left(D_{{M_\mu}/{G_\mu}}\right)\left(\mathfrak{c}(t)\right),
$$
if and only if, the reduced implicit Lagrange-Routh equations~\eqref{LRred} hold.
\end{proposition}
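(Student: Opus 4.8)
The plan is to unwind both descriptions of the dynamics into coordinates and show they coincide term by term, reusing the work already done in the restriction step. First I would write down explicitly the reduced presymplectic form $\Omega_{M_\mu/G_\mu}$ on $M_\mu/G_\mu$: starting from~\eqref{eq:omega_N} for $\Omega_{M_\mu}$ and using the body-fixed frame $\{X_i,\hat{E}_a\}$ together with the $G_\mu$-invariant coordinates $([q]_{G_\mu},v^i,\hat{\rm v}^a,p_i)$, one pushes $\Omega_{M_\mu}$ down along $\pi_\mu$. The semibasic forms $\tilde E^a$ must be rewritten in terms of the dual coframe $\{\hat E^a\}$ of the invariant frame, and one checks that the resulting expression no longer involves $\theta^A$; this is the coordinate incarnation of observations (i)–(iii) preceding the proposition. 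In parallel, I would compute $\mathbf{d}\mathfrak{E}_\mu$ on $M_\mu/G_\mu$: since $\mathcal{E}_\mu=p_iv^i-R^\mu$ with $R^\mu$ already known to be $G_\mu$-invariant and hence reducible to $TQ/G_\mu$, the differential is obtained by pushing~\eqref{eq:d_energy} through $\pi_\mu$, converting $\tilde E_a^{\rm V}(R^\mu)$ and $d\tilde{\rm v}^a$ into the hatted objects via $\hat E_b^{\rm V}(R^\mu)=A_b^a\tilde E_a^{\rm V}(R^\mu)$, and using the reduced actions of $X_i^{\rm C}$, $X_i^{\rm V}$, $\hat E_a^{\rm V}$ on the coordinates that were computed in \S\ref{sec:redsection}.

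Next I would impose the Dirac condition $\dot{\mathfrak{c}}(t)\oplus\mathbf{d}\mathfrak{E}_\mu(\mathfrak{c}(t))\in D_{M_\mu/G_\mu}(\mathfrak{c}(t))$, i.e. $\mathbf{i}_{\dot{\mathfrak{c}}}\Omega_{M_\mu/G_\mu}=\mathbf{d}\mathfrak{E}_\mu$ modulo the relevant constraint subspace (recall $D_{M_\mu/G_\mu}$ is the graph of the reduced presymplectic form). Matching the $dx^i$, $dv^i$, $dp_i$, $d\hat{\rm v}^a$ and $\hat E^a$ components of both sides then yields, respectively: the equations $p_i=X_i^{\rm V}(R^\mu)$ and $\hat E_a^{\rm V}(R^\mu)=0$ from the $dv^i$ and $d\hat{\rm v}^a$ slots (these come from the kernel directions of $\Omega_{M_\mu/G_\mu}$ along the velocity fibers, exactly as the Legendre-type relations appeared in~\eqref{eq:RD}); $\dot x^i=v^i$ from the $dp_i$ slot; the momentum equation $\dot p_i=X_i^{\rm C}(R^\mu)-\mu_aB^a_{ij}v^j$ from the $dx^i$ slot, where the curvature term $\mu_aB^a_{ij}dx^i\wedge dx^j$ in $\Omega_{M_\mu/G_\mu}$ supplies the gyroscopic contribution; and the reconstruction-type relation $\dot\theta^I=\hat{\rm v}^JL^I_J-\dot x^i\Lambda^I_i$ from the components pairing with $dp$ along the $\theta^I$-directions, together with the fact (from~\eqref{eq:RD} and the discussion of $\ker\Omega_\mu$) that the $\mathfrak{g}_\mu$-part $\tilde{\rm v}^A$ is undetermined — which is why only $\theta^I$, not $\theta^A$, appears in~\eqref{LRred}. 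I would then read off that this collection is precisely~\eqref{LRred}, and conversely that any solution of~\eqref{LRred} satisfies the Dirac condition, establishing the "if and only if".

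The main obstacle I expect is bookkeeping the change of coframe and making sure no $G_\mu$-noninvariant quantity sneaks into $\Omega_{M_\mu/G_\mu}$ or $\mathbf{d}\mathfrak{E}_\mu$: the term $\tfrac12\mu_aC^a_{bc}\tilde E^b\wedge\tilde E^c$ in~\eqref{eq:omega_N} and the term $(\mu_a+\tilde E_a^{\rm V}(R^\mu))C^a_{bc}\tilde{\rm v}^c\tilde E^b$ in~\eqref{eq:d_energy} both involve the tilde-frame, which is only $G$- (not $G_\mu$-) equivariant in a way compatible with the quotient; one must verify, using the structure-constant identities $C^J_{AB}=0$, $C^c_{Ab}\mu_c=0$, $C^C_{ab}=0$ established in \S\ref{sec:redsection}, that the potentially problematic pieces either vanish on $M_\mu$ or reorganize into the hatted frame. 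A secondary subtlety is that $D_{M_\mu/G_\mu}$ is the graph of a genuinely \emph{presymplectic} (not symplectic) form, so solutions are determined only up to $\ker\Omega_{M_\mu/G_\mu}$; I would point out that this kernel is exactly spanned by the $\partial/\partial\hat{\rm v}^A$-type directions corresponding to $\mathfrak{g}_\mu$, which is consistent with~\eqref{LRred} leaving $\hat{\rm v}^A$ (equivalently $\theta^A$) free, so no genuine ambiguity is introduced in the variables that~\eqref{LRred} does determine. Once these two points are handled, the remaining verification is the routine term-by-term comparison sketched above.
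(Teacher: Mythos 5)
Your proposal follows essentially the same route as the paper's proof: express $\Omega_{M_\mu}$ and $\mathbf{d}\mathcal{E}_\mu$ in the invariant body-fixed coframe via $\tilde E^I=A^I_J\hat E^J$ and the structure-constant identities of \S\ref{sec:redsection}, push both down along $\pi_\mu$, and match components, with the group-direction slots producing $\mu_aC^a_{IJ}(\hat{\rm v}^I-\hat{\rm u}^I)=0$ and hence $\hat{\rm v}^I=\hat{\rm u}^I$. One small correction to your closing remark: the kernel of $\Omega_{M_\mu/G_\mu}$ is not spanned by the $\partial/\partial\hat{\rm v}^A$ directions alone but by all of $\partial/\partial v^i$ and $\partial/\partial\hat{\rm v}^a$ (which is exactly why the $dv^i$ and $d\hat{\rm v}^a$ slots yield the algebraic Legendre-type constraints rather than evolution equations, as you correctly note earlier), whereas the genuine $\tilde{\rm v}^A$-versus-$\tilde{\rm u}^A$ ambiguity lives at the restricted level on $M_\mu$ and disappears after quotienting out $G_\mu$, since $\theta^A$ is no longer a coordinate on the reduced space.
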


\begin{proof}
Using the splitting $(E_a)=(E_I,E_A)$ of $\mathfrak{g}$, we write the presymplectic form $\Omega_{M_\mu}$ in $M_\mu$ as
\[
\Omega_{M_\mu}  = {d}x^i \wedge {d}p_i + \frac{1}{2} \mu_a \left( B^a_{ij} {d}x^i \wedge {d}x^j -
C^a_{IJ} {\tilde E}^I \wedge {\tilde E}^J \right).
\]
Recall that, with the notations in \S\ref{sec:implicit}, we have $\hat E_a=A_a^b \tilde E_b$. Then we have, for duals, $\tilde E^I=A^I_J \hat E^J$. With this we can rewrite the previous expression of $\Omega_{M_\mu}$ in terms of the forms $\hat E^I$:
\[
\Omega_{M_\mu}  = {d}x^i \wedge {d}p_i + \frac{1}{2} \mu_a \left( B^a_{ij} {d}x^i \wedge {d}x^j -
A^I_K A^J_L C^a_{IJ} {\hat E}^K \wedge {\hat E}^L \right).
\]
In this frame, the reduced two-form $\Omega_{M_\mu/G_\mu}$ has formally the same expression.

The proof will follow from the expression~\eqref{eq:d_energy} for $\mathbf{d} \mathcal{E}_\mu$ in invariant (hat) coordinates, and from its reduction to $ \mathbf{d}\mathfrak{E}_\mu$. This computation is completely analogous to the one carried out before in \S\ref{sec:redsection}. If we take into account that $B^a_{ij} = A^a_b \hat B^b_{ij}$, $\tilde {\rm v}^I=A^I_J \hat{\rm v}^J$, etc., it follows that
\begin{align*}
\mathbf{d}\mathcal{E}_\mu = &-\left(  \frac{\partial R^\mu}{\partial x^i} - \Lambda_i^I \frac{\partial R^\mu}{\partial \theta^I} \right)dx^i - \left( \frac{\partial R^\mu}{\partial v^i}- p_i\right)  dv^i   -\mu_a A^I_K A^J_L C^a_{cd}\hat{\rm v}^L\hat{E}^K  \\ &\hspace{2cm}  - \frac{\partial R^\mu}{\partial \hat{\rm v}^b}  C^a_{cd} (A^{-1})^b_a A^c_K A^d_L \hat{\rm v}^L \hat{E}^K   -  \frac{\partial R^\mu}{\partial \hat{\rm v}^a} d \hat{\rm v}^a + v^i dp_i.
\end{align*}
Using the relations for the structure constants derived in \S\ref{sec:redsection}, it is then straightforward to check that a curve
\[
\mathfrak{c}(t) = (x^i(t), \theta^I(t), v^i(t), \hat{\rm v}^a(t), p_i(t) ):I\subset \mathbb{R}\to {M_\mu}/{G_\mu},
\]
satisfies (\ref{eq:redHam}), if and only if the implicit Lagrange-Routh equations~\eqref{LRred} hold. One of the equations will appear rather as
 $\mu_aC^a_{IJ}(\hat{\rm v}^I-\hat{\rm u}^I)=0$, but, as we mentioned before,  this implies  $\hat{\rm v}^I=\hat{\rm u}^I$.
\end{proof}

\paragraph{Remarks.}
We will call the reduction process that we have just described {\bfi Routh-Dirac reduction}. It is a particular instance of a reduced Dirac dynamical system. One can see a similar discussion on the Dirac reduction associated with the symplectic reduction in the more general context of Dirac anchored vector bundle reduction in \cite{CeRaYo2015}.

It should be remarked that the above Dirac structure $D_{M_{\mu}/G _\mu } $ is different from the one used in \cite{YoMa2007, YoMa2009}, where the Dirac structure is rather defined as a subbundle of the vector bundle $(TM \oplus T^*M)/G$. This last definition may be advantageous when one wants to give a geometric interpretation of the so-called implicit Lagrange-Poincar\'e or Hamilton-Poincar\'e equations in the variational link with Dirac structures. These two sets of equations are the result of a reduction process that takes the full symmetry group $G$ of a mechanical system into account, but it does not produce {\it conserved quantities} $\mu$. Our aim, in the previous sections, was a {\it Routh-type reduction}; so we wanted to take full advantage of the conserved quantities, at the price of reducing by a possibly smaller symmetry group $G_\mu$.

\section{Examples}\label{sec:examples}
To conclude, we will now discuss two illustrative examples where the regularity conditions on the Lagrangian fail.

\paragraph{Linear Lagrangians.}
A linear Lagrangian is probably the easiest case where $L$ is not locally $G$-regular. In general, a linear Lagrangian $L$ on $TQ$ conveys to the form $L=\langle \alpha(q),v_q\rangle - f(q)$ for some 1-form $\alpha$ on $Q$ and function $f$ on $Q$. We say a few words about the two examples we mentioned in the Introduction.

\medskip

The Lagrangian
\[
L(x,y,v_x,v_y)=(v_x)^2+v_x v_y- V(x)
\]
of the first example has a cyclic coordinate $y$. Fix a value $\mu\in\mathbb{R}$ for the momentum $p_y$. The Routhian, constructed with respect to the trivial connection on $\mathbb{R}^2\to\mathbb{R}$, reads
\[
R^\mu(x,y,v_x,v_y)=(v_x)^2+v_x v_y-\mu v_y - V(x),
\]
and the reduced implicit Routh  equations are then
\[
\dot x=v_x,\qquad \dot p_x=\frac{\partial R^\mu}{\partial x}=-V'(x),\qquad  p_x=2v_x+v_y, \qquad v_x-\mu=0.
\]
Together with the reconstructions equations $\dot y=v_y$ and $p_y=\mu$, we get a solution of the original implicit Euler-Lagrange equations for $L$ with the prescribed value of the momentum $p_y$.

\medskip

Consider again the Lagrangian for the dynamics of point vortices in the plane from \cite{Ch1978}:
\[
L(z_l,{\dot z}_l)=\frac{1}{2i}\sum_k\gamma_k\big(\bar z_k\dot z_k-z_k\overline{\dot z_k}\big)-\frac{1}{2}\sum_n\sum_{k\neq n}\gamma_n\gamma_k\text{ln}|z_n-z_k|.
\]
Although it is $S^1$ invariant under rotations on $\mathbb{C}$, the coordinates are not adapted. A possible way to proceed is to take polar coordinates for each position $z_k(t)\in \mathbb{C}$, say $z_k=\rho_k e^{i\theta_k}$, and then to consider the relative angle with respect to $\theta_1$. Defining $\phi_k=\theta_k-\theta_1$ for $k\geq 2$ and $\phi_1=\theta_1$, we have an $S^1$-action along $\phi_1$ with associated momentum $\partial \bar L/\partial\dot \phi_1$, where $\bar L$ is the Lagrangian in the new coordinates. A computation then shows that this conserved quantity is precisely the moment of circulation (also called angular impulse in~\cite{Ne2001})
\[
I=\sum_k\gamma_k \rho_k^2=\sum_k\gamma_k|z_k|^2.
\]
From here the implicit Lagrange-Routh equations follow without further difficulty.

\paragraph{A degenerate Lagrangian.}
We will now discuss a mechanical model for field theories to be found in \cite{CaKo1982, CaKo1987}, although we will follow the exposition in \cite{EcMuRo1999}, where it appears in the context of presymplectic reduction. The Lagrangian is
\[
L=\big(\dot{\bar\psi}\big)^i m_{ij}\big(\dot{\psi}\big)^j + \big(\dot{\bar\psi}\big)^i c_{ij}\big({\psi}\big)^j
- \big({\bar\psi}\big)^i \tilde c_{ij}\big({\dot\psi}\big)^j - \big({\bar\psi}\big)^i \tilde r_{ij}\big({\psi}\big)^j,
\]
where $\psi^i,\bar\psi^i$ represent the scalar complex fields (which are regarded as coordinates in the model), and where the matrices $m_{ij},c_{ij},\tilde c_{ij},r_{ij}$ satisfy: $m_{ij},r_{ij}$ are hermitian and $\overline{(\tilde c_{ij})}=-c_{ji}$. This guarantees that $L$ is real.

We will consider the same values for these matrices that appear in~\cite{EcMuRo1999}:{
\begin{equation*}
  m_{ij} =
  \begin{pmatrix}
  0 & 0 & 0 \\
  0 & m_2  & 0 \\
  0 & 0 &  m_3
 \end{pmatrix},
  \qquad c_{ij} = \tilde c_{ij} = \frac{1}{2}
  \begin{pmatrix}
  0 & 0 & 0 \\
  0 & i  & 0 \\
  0 & 0 &  i
 \end{pmatrix},
   \qquad r_{ij}=
  \begin{pmatrix}
  1 & 0 & 0 \\
  0 & 1  & 0 \\
  0 & 0 &  1
 \end{pmatrix},
\end{equation*}
and which lead to a degenerate Lagrangian. With that choice, it is apparent that $L$ becomes invariant under the actions of $S^1$ by rotation in the second and third scalar fields. Thus, we have a $\mathbb{T}^2$-action on $Q$. Writing $\psi^k=x^k+iy^k$ and $\dot\psi^k=u^k+iv^k$ the Lagrangian becomes:
\begin{align*}
L =\; &m_2\left((u^2)^2+(v^2)^2\right)+m_3\left((u^3)^2+(v^3)^2\right)+v^2x^2+v^3x^3-u^2 y^2 - u^3 y^3\\
&-\left((x^1)^2+(y^1)^2\right)-\left((x^2)^2+(y^2)^2\right)-\left((x^3)^2+(y^3)^2\right).
\end{align*}
In this case, coordinates adapted to the $\mathbb{T}^2$-action are simply the usual polar coordinates on both $(x^2,y^2)$ and $(x^3,y^3)$. We will denote these sets of polar coordinates by $(r,\theta)$ and $(\rho,\phi)$ respectively. Then, it follows
\begin{align*}
L =\; &m_2\left(r^2v_\theta^2+v_r^2\right)+m_3\left(\rho^2v_\theta^2+v_{\rho}^2\right) +r^2v_\theta+\rho^2v_\phi-r^2-\rho^2-\left((x^1)^2+(y^1)^2\right),
\end{align*}
and the equations for $p_\theta$ and $p_\phi$ are of the form
\[
p_\theta=2m_2r^2v_\theta+r^2,\qquad p_\phi=2m_3\rho^2v_\theta+\rho^2.
\]
\medskip

The Routhian reads, for a given choice $(p_\theta,p_\phi)=(\mu_\theta,\mu_\phi)$, and with the trivial connection, as follows:
\[
R^\mu = \left(r^2v_\theta^2+v_r^2\right)+m_3\left(\rho^2v_\theta^2+v_{\rho}^2\right)+
r^2v_\theta+\rho^2v_\phi-r^2-\rho^2-\left((x^1)^2+(y^1)^2\right)-\mu_\theta v_\theta-\mu_\phi v_\phi.
\]
The reduced implicit Lagrange-Routh equations in this case are
\begin{alignat*}{3}
 \dot x^1&=u^1,\qquad\quad & \dot p_{x^1}&=-2x^1,\qquad & p_{x^1}&=0 ,\\
 \dot y^1&=v^1,\qquad & \dot p_{y^1}&=-2y^1,\qquad & p_{y^1}&=0 ,\\
 \dot r&=v_r,\qquad & \dot p_{r}&=2rv_{\theta}^2+2rv_{\theta},\qquad & p_{r}&=2v_r ,\\
 \dot \rho&=v_\rho,\qquad & \dot p_{\rho}&=2rv_{\phi}^2+2\rho v_{\phi},\qquad & p_{\rho}&=2v_\rho ,\\
 2r^2v_\theta+r^2&=\mu_\theta,\qquad & 2\rho^2v_\phi+\rho^2&=\mu_\phi,
\end{alignat*}
and the reconstruction equations are given by
\[
\dot \theta =v_\theta, \qquad \dot \phi =v_\phi, \qquad  p_\theta =\mu_\theta, \qquad  p_\phi =\mu_\phi.
\]
\paragraph{Acknowledgments} We thank Santiago Capriotti for pointing out an inaccuracy in an earlier version of our text. EGTA wants to thank the Czech Science Foundation for funding under research grant No 14-02476S `Variations, Geometry and Physics'. EGTA and TM both acknowledge support from FWO--Vlaanderen. HY is partially supported by JSPS (Grant-in-Aid 26400408), JST (CREST), Waseda University (SR 2014B-162, SR 2015B-183) and MEXT's "Top Global University Project''. This work is part of the IRSES project ``Geomech'' (246981) within the 7th European Community Framework Programme. EGTA and TM are grateful to the Department of Applied Mechanics and Aerospace Engineering of Waseda University for its hospitality during the visits which made this work possible.


\end{document}